\DeclareFontFamily{U}{mathx}{\hyphenchar\font45}
\DeclareFontShape{U}{mathx}{m}{n}{
      <5> <6> <7> <8> <9> <10>
      <10.95> <12> <14.4> <17.28> <20.74> <24.88>
      mathx10
      }{}
\DeclareSymbolFont{mathx}{U}{mathx}{m}{n}
\DeclareMathAccent{\widecheck}      {0}{mathx}{"71}
\newcommand{\R}{\mathbb{R}} 
\newcommand{\N}{\mathbb{N}} 
\newcommand{\F}{\mathbb{F}} 
\newcommand{\bszero}{\boldsymbol{0}} 
\newcommand{\bst}{\boldsymbol{t}}    
\newcommand{\bsx}{\boldsymbol{x}}    
\newcommand{\bsz}{\boldsymbol{z}}    
\newcommand{\bsDelta}{\boldsymbol{\Delta}}    
\DeclareSymbolFont{bbold}{U}{bbold}{m}{n}
\DeclareSymbolFontAlphabet{\mathbbold}{bbold}
\newcommand{\ind}{\mathbbold{1}}
\newcommand{\mC}{\mathsf{C}}
  \providecommand*{\toclevel@author}{999}
  \providecommand*{\toclevel@title}{0}
\DeclareMathOperator{\ok}{ok}
\begin{document}
\spnewtheorem{algo}{Algorithm}{\bf}{\rm}
\newcommand{\bsi}{\boldsymbol{i}}    
\newcommand{\bsk}{\boldsymbol{k}}    
\newcommand{\bsl}{\boldsymbol{l}}    
\newcommand{\bsr}{\boldsymbol{r}}    
\newcommand{\bsnu}{\boldsymbol{\nu}}    
\newcommand{\cc}{\mathcal{C}}
\newcommand{\cl}{\mathcal{L}}
\newcommand{\cn}{\mathcal{N}}
\newcommand{\Order}{\mathcal{O}}
\newcommand{\cp}{\mathcal{P}}
\newcommand{\cx}{\mathcal{X}}
\newcommand{\natm}{\N_{0,m}}
\newcommand{\cube}{[0,1)^d}
\newcommand{\hf}{\hat{f}}
\newcommand{\rf}{\mathring{f}}
\newcommand{\tf}{\tilde{f}}
\newcommand{\hg}{\hat{g}}
\newcommand{\hI}{\hat{I}}
\newcommand{\tvk}{\tilde{\bsk}}
\newcommand{\hS}{\widehat{S}}
\newcommand{\tS}{\widetilde{S}}
\newcommand{\wcS}{\widecheck{S}}
\newcommand{\rnu}{\mathring{\nu}}
\newcommand{\tnu}{\widetilde{\nu}}
\newcommand{\hnu}{\widehat{\nu}}
\newcommand{\homega}{\widehat{\omega}}
\newcommand{\wcomega}{\mathring{\omega}}
\newcommand{\fC}{\mathfrak{C}}
\newcommand{\nodes}{\{\bsz_i\}_{i=0}^{\infty}}
\newcommand{\nodesn}{\{\bsz_i\}_{i=0}^{n-1}}
\newcommand{\norm}[1]{\ensuremath{\left \lVert #1 \right \rVert}} \newcommand{\bigabs}[1]{\ensuremath{\bigl \lvert #1 \bigr \rvert}}
\newcommand{\Bigabs}[1]{\ensuremath{\Bigl \lvert #1 \Bigr \rvert}}
\newcommand{\biggabs}[1]{\ensuremath{\biggl \lvert #1 \biggr \rvert}}
\newcommand{\Biggabs}[1]{\ensuremath{\Biggl \lvert #1 \Biggr \rvert}}
\newcommand{\ip}[3][{}]{\ensuremath{\left \langle #2, #3 \right \rangle_{#1}}}

\allowdisplaybreaks

\title*{Reliable Adaptive Cubature Using Digital Sequences}
\author{Fred J. Hickernell \and Llu\'is Antoni Jim\'enez Rugama}
\institute{Fred J. Hickernell \and Llu\'is Antoni Jim\'enez Rugama
\at Department of Applied Mathematics,  Illinois Institute of Technology, 10 W. 32$^{\text{nd}}$ Street, E1-208, Chicago, IL 60616, USA 
\email{hickernell@iit.edu}, \email{ljimene1@hawk.iit.edu}}
\maketitle

\emph{In honor of Ilya M. Sobol'}

\bigskip

\abstract{Quasi-Monte Carlo cubature methods often sample the integrand using Sobol' (or other digital) sequences to obtain higher accuracy than IID sampling.     An important question is how to conservatively estimate the error of a digital sequence cubature so that the sampling can be terminated when the desired tolerance is reached.  We propose an error bound based on the discrete Walsh coefficients of the integrand and use this error bound to construct an adaptive digital sequence cubature algorithm.  The error bound and the corresponding algorithm are guaranteed to work for integrands lying in a cone defined in terms of their true Walsh coefficients.  Intuitively, the inequalities defining the cone imply that the ordered Walsh coefficients do not dip down for a long stretch and then jump back up.  An upper bound on the cost of our new algorithm is given in terms of the \emph{unknown} decay rate of the Walsh coefficients.} 

\section{Introduction}

Quasi-Monte Carlo cubature rules approximate multidimensional integrals over the unit cube by an equally weighted sample average of the integrand values at the first $n$ nodes from some sequence $\nodes$.  This node sequence should be chosen to minimize the error, and for this one can appeal to Koksma-Hlawka type error bounds of the form 
\begin{equation} \label{KHineq}
\biggabs{\int_{\cube} f(\bsx) \D \bsx - \frac{1}{n} \sum_{i=0}^{n-1} f(\bsz_i)} \le D(\nodesn) V(f).
\end{equation}
The discrepancy, $D(\nodesn)$, measures how far the empirical distribution of the first $n$ nodes differs from the uniform distribution.  The variation, $V(f)$, is some semi-norm of the integrand, $f$.  The definitions of the discrepancy and variation are linked to each other. Examples of such error bounds are given by \cite[Chap.\ 2--3]{DicPil10a}, \cite{Hic97a}, \cite[Sec.\ 5.6]{Lem09a}, \cite[Chap.\ 2--3]{Nie92}, and \cite[Chap.\ 9]{NovWoz10a}.

A practical problem is how large to choose $n$ so that the absolute error is smaller than some user-defined tolerance, $\varepsilon$.  Error bounds of the form \eqref{KHineq} do not help in this regard because it is too hard to compute $V(f)$, which is typically defined in terms of integrals of mixed partial derivatives of $f$.

This article addresses the challenge of reliable error estimation for quasi-Monte Carlo cubature based on digital sequences, of which Sobol' sequences are the most popular example.  The vector space structure underlying these digital sequences facilitates a convenient expression for the error in terms of the (Fourier)-Walsh coefficients of the integrand.  Discrete Walsh coefficients can be computed efficiently, and their decay provides a reliable cubature error estimate.  Underpinning this analysis is the assumption that the integrands lie in a cone  defined in terms of their true Walsh coefficients; see \eqref{conecond}.

The next section introduces digital sequences and their underlying algebraic structure.  Sect.\ \ref{WaveWalshsec} explains how the cubature error using digital sequences as nodes can be elegantly formulated in terms of the Walsh series representation of the integrand. Our contributions begin in Sect.\ \ref{ErrEstsec}, where we derive a reliable data-based cubature error bound for a cone of integrands, \eqref{errbd}, and an adaptive cubature algorithm based on that error bound, Algorithm \ref{adapalgo}.  The cost of the algorithm is also represented in terms of the unknown decay of the Walsh series coefficients and the error tolerance in Theorem \ref{adapalgothm}.  A numerical example and discussion then conclude this article. A parallel development for cubature based on lattice rules is given in \cite{JimHic16a}.

\section{Digital Sequences}

The integrands considered here are defined over the half open $d$-dimensional unit cube, $\cube$.  For integration problems on other domains one may often transform the integration variable so that the problem is defined on $\cube$.  See \cite{Caf98,HicSloWas03c,HicSloWas03b,HicSloWas03a,HicSloWas03e} for some discussion of variable transformations and the related error analysis.  The example in Sect.\ \ref{numexpsec} also employs a variable transformation.

Digital sequences are defined in terms of digitwise addition.  Let $b$ be a prime number; $b=2$ is the choice made for Sobol' sequences.  Digitwise addition, $\oplus$, and negation, $\ominus,$ are defined in terms of the proper $b$-ary expansions of points in $\cube$:
\begin{gather*}
\bsx = \left(\sum_{\ell=1}^{\infty} x_{j\ell} b^{-\ell}\right)_{j=1}^d, \quad \bst = \left(\sum_{\ell=1}^{\infty} t_{j\ell} b^{-\ell}\right)_{j=1}^d, \qquad x_{j\ell}, t_{j\ell} \in \F_b:=\{0, \ldots, b-1\},\\
\bsx \oplus \bst = \left(\sum_{\ell=1}^{\infty} [(x_{j\ell} + t_{j\ell}) \bmod b] b^{-\ell} \pmod{1} \right)_{j=1}^d,\qquad \bsx \ominus \bst:=\bsx \oplus (\ominus \bst),\\
\ominus \bsx = \left(\sum_{\ell=1}^{\infty} [-x_{j\ell} \bmod b] b^{-\ell}\right)_{j=1}^d,  \qquad a \bsx:=\underbrace{\bsx \oplus \cdots \oplus \bsx}_{a \text{ times}}\ \forall a \in \F_b.
\end{gather*}

We do not have associativity for all of $\cube$.  For example, for $b=2$, 
\begin{gather*}
1/6={}_20.001010\ldots, \quad  1/3={}_20.010101\ldots,\quad 1/2 = {}_20.1000\dots\\
1/3\oplus 1/3={}_20.00000\ldots =0, \quad 1/3\oplus1/6={}_20.011111\ldots=1/2,\\
 (1/3\oplus1/3)\oplus1/6=0 \oplus 1/6 = 1/6, \quad 1/3\oplus(1/3\oplus1/6)=1/3 \oplus 1/2 = 5/6.
\end{gather*}
This lack of associativity comes from the possibility of digitwise addition resulting in an infinite trail of digits $b-1$, e.g., $1/3\oplus1/6$ above.  

Define the Boolean operator that checks whether digitwise addition of two points does not result in an infinite trail of digits $b-1$:
\begin{equation} \label{okdef}
\ok(\bsx,\bst)=\begin{cases}
\text{true}, & \min_{j=1,\ldots, d} \sup\{ \ell : [(x_{j\ell} + t_{j\ell}) \bmod b ] \neq b-1\}=\infty, \\
\text{false}, & \text{otherwise}.
\end{cases}
\end{equation}
If $\cp \subset \cube$ is some set that is closed under $\oplus$ and $\ok(\bsx,\bst) = $ true for all $\bsx,\bst \in \cp$, then associativity holds for all points in $\cp$.  Moreover, $\cp$ is an Abelian group and also a vector space over the field $\F_b$.

Suppose that $\cp_\infty =\{\bsz_i\}_{i=0}^{\infty} \subset \cube$ is such a vector space that satisfies the following additional conditions:
\begin{subequations} \label{cpinfvector}
\begin{gather}
\{\bsz_1, \bsz_b, \bsz_{b^2}, \ldots\} \text{ is a set of linearly independent points}, \\
\bsz_i = \sum_{\ell=0}^{\infty} i_\ell \bsz_{b^\ell}, \qquad \text{where }i = \sum_{\ell=0}^{\infty} i_{\ell} b^{\ell} \in \N_0, \quad i_\ell \in \F_b.
\end{gather}
\end{subequations}
Such a $\cp_\infty$ is called a \emph{digital sequence}.  Moreover, any $\cp_m := \{\bsz_i\}_{i=0}^{b^m-1}$ is a subspace of $\cp_\infty$ and is called a \emph{digital net}.  From this definition it is clear that
\[
\cp_0=\{\bszero\}\subset \cp_1=\{\bszero,\bsz_1, \ldots, (b-1) \bsz_1\}  \subset \cp_2 \subset \cdots \subset \cp_{\infty} =\{\bsz_i\}_{i=0}^{\infty}.
\]

This \emph{digital sequence} definition is equivalent to the traditional one in terms of generating matrices. By \eqref{cpinfvector} and according to the $b$-ary expansion notation introduced earlier, the $m,\ell$ element of generating matrix, $\mC_j$, for the $j^{\text{th}}$ coordinate is the $\ell^{\text{th}}$ binary digit of the $j^{\text{th}}$ element of $\bsz_{b^{m-1}}$, i.e., 
\[
\mC_j=
\begin{pmatrix}
(z_{1})_{j1} & (z_{b})_{j1} & (z_{b^2})_{j1} & \cdots \\
(z_{1})_{j2} & (z_{b})_{j2} & (z_{b^2})_{j2} & \cdots \\
(z_{1})_{j3} & (z_{b})_{j3} & (z_{b^2})_{j3} & \cdots \\
\vdots & \vdots & \vdots & \ddots
\end{pmatrix},
\qquad \text{for }j = 1,\dots,d.
\]

The Sobol' sequence works in base $b=2$ and makes a careful choice of the basis $\{\bsz_1, \bsz_2, \bsz_{4}, \ldots\}$ so that the points are evenly distributed. Figure \ref{Sobolfig}a) displays the initial points of the two-dimensional Sobol' sequence.  In Figure \ref{Sobolfig}b) the Sobol' sequence has been linearly scrambled to obtain another digital sequence and then digitally shifted.

\begin{figure}
\centering
\begin{tabular}{>{\centering}p{5cm}>{\centering}p{5cm}}
\includegraphics[width=5cm]{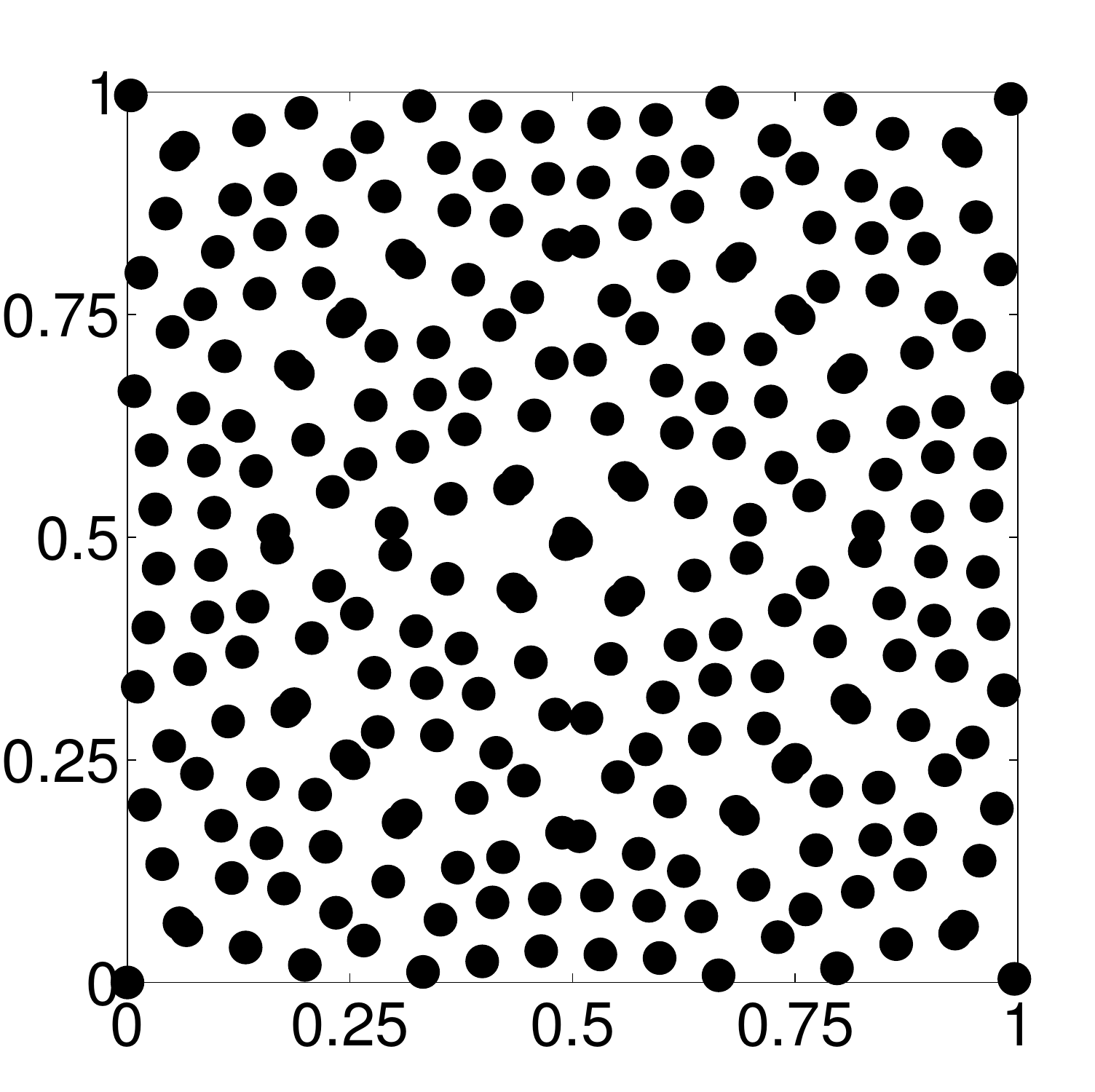} &
\includegraphics[width=5cm]{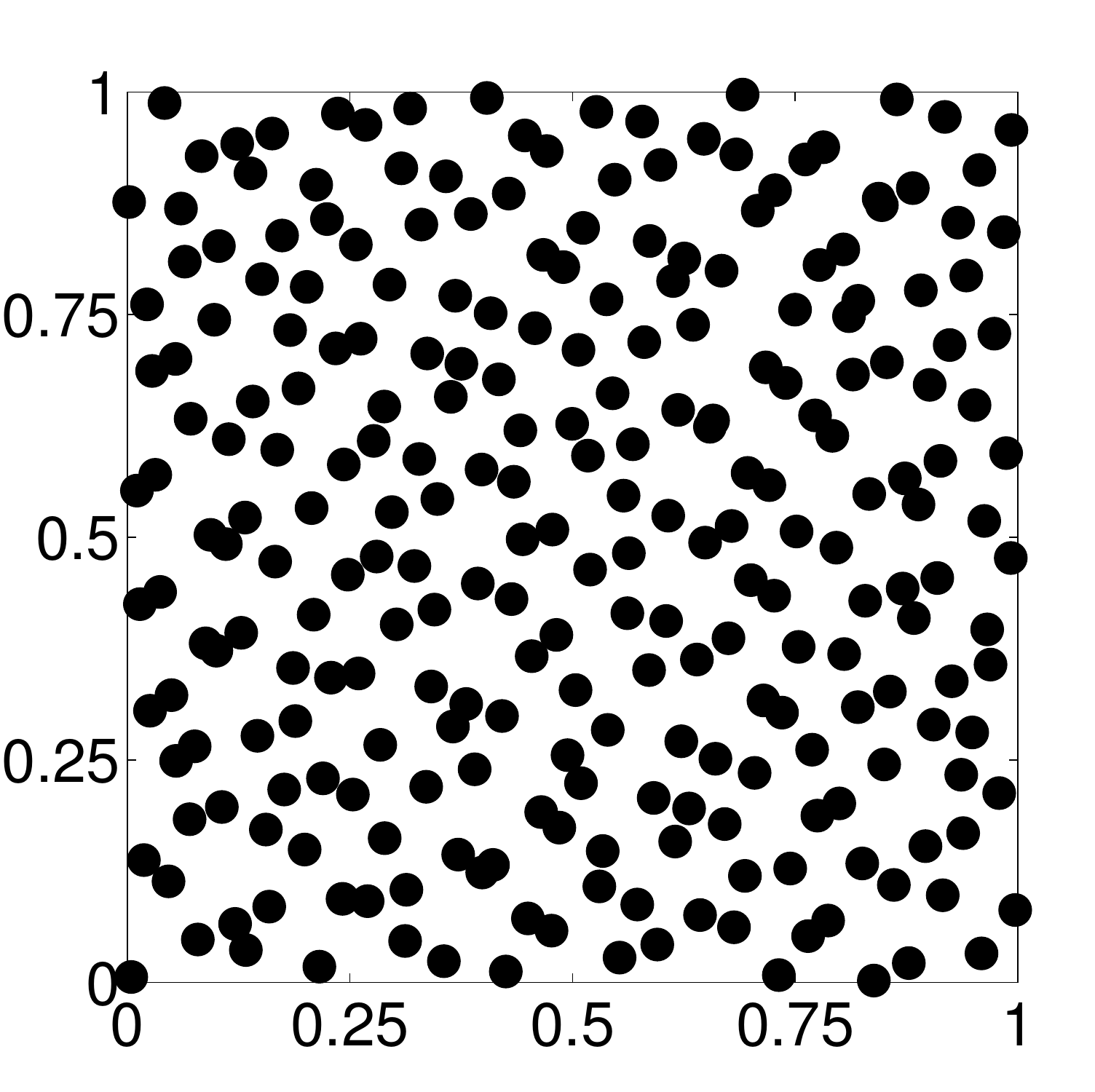}\tabularnewline
a) & b)
\end{tabular}
\caption{a) 256 Sobol' points, b) 256 scrambled and digitally shifted Sobol' points. \label{Sobolfig}}
\end{figure}

\section{Walsh Series} \label{WaveWalshsec}
Non-negative integer vectors are used to index the Walsh series for the integrands.  The set $\N_0^d$ is a vector space under digitwise addition, $\oplus$, and the field $\F_b$.  Digitwise addition and negation are defined as follows for  all $\bsk, \bsl \in \N_0^d$: 
\begin{gather*}
\bsk =  \left(\sum_{\ell=0}^{\infty} k_{j\ell} b^{\ell}\right)_{j=1}^d, \quad \bsl =  \left(\sum_{\ell=0}^{\infty} l_{j\ell} b^{\ell}\right)_{j=1}^d, \qquad k_{j\ell}, l_{j\ell} \in \F_b, \\
\bsk \oplus \bsl = \left(\sum_{\ell=0}^{\infty} [(k_{j\ell} + l_{j\ell}) \bmod b] b^{\ell}\right)_{j=1}^d, \\
\ominus \bsk = \left(\sum_{\ell=0}^{\infty} (b-k_{j\ell}) b^{\ell}\right)_{j=1}^d, \qquad
a \bsk:=\underbrace{\bsk \oplus \cdots \oplus \bsk}_{a \text{ times}}\ \forall a \in \F_b.
\end{gather*}

For each \emph{wavenumber} $\bsk \in \N_0^d$ a function $\ip{\bsk}{\cdot}: \cube \to \F_b$ is defined as
\begin{subequations} \label{bilinear}
\begin{equation}
\ip{\bsk}{\bsx} := \sum_{j=1}^{d} \sum_{\ell=0}^{\infty} k_{j\ell}x_{j,\ell+1}  \pmod b.
\end{equation}
For all points $\bst, \bsx \in \cube$, wavenumbers $\bsk, \bsl \in \N_0^d$, and $a \in \F_b$, it follows that
\begin{gather}
\ip{\bsk}{\bszero} = \ip{\bszero}{\bsx} = 0,\\
\ip{\bsk}{a \bsx \oplus \bst} = a\ip{\bsk}{\bsx} + \ip{\bsk}{\bst} \pmod b \quad \text{if } \ok(a\bsx,\bst) \label{bilinearlinxprop} \\
\ip{a \bsk \oplus \bsl}{\bsx} = a\ip{\bsk}{\bsx} + \ip{\bsl}{\bsx} \pmod b, \label{bilinearlinkprop}\\
\ip{\bsk}{\bsx} = 0 \ \forall \bsk \in \N_0^d \ \implies \ \bsx=\bszero.
\end{gather}
\end{subequations}
The digital sequences $\cp_{\infty}=\{\bsz_i\}_{i=0}^{\infty}$ considered here are assumed to contain sufficient points so that 
\begin{equation}
\ip{\bsk}{\bsz_i} =  0 \ \forall i \in \N_0   \ \implies \ \bsk=\bszero. \label{netpropc}
\end{equation}

Defining $\natm:=\{0, \ldots, b^{m}-1\}$, the \emph{dual net} corresponding to the net $\cp_m$ is the set of all wavenumbers for which $\ip{\bsk}{\cdot}$ maps the whole net to $0$:  
\begin{align*}
\cp^{\perp}_m &:= \{\bsk \in \N_0^d : \ip{\bsk}{\bsz_i} = 0, \ i\in \natm\} \\
&= \{\bsk \in \N_0^d : \ip{\bsk}{\bsz_{b^{\ell}}} = 0, \ \ell=0, \ldots, m-1\}.
\end{align*}
The properties of the bilinear transform defined in \eqref{bilinear} imply that the dual nets $\cp^{\perp}_m$ are subspaces of each other: 
\[
\cp_0^{\perp}=\N_0^d \supset \cp_1^{\perp}  \supset \cdots \supset \cp_{\infty}^{\perp} =\{\bszero\}.
\]


The integrands are assumed to belong to some subset of $L^2(\cube)$, the space of square integrable functions.  The $L^2$ inner product is defined as 
\[
\ip[2]{f}{g} = \int_{\cube} f(\bsx) \overline{g(\bsx)} \, \D \bsx.
\]
The Walsh functions $\{\exp(2 \pi \sqrt{-1} \ip{\bsk}{\cdot}/b) : \bsk \in \N_0^d\}$ \cite[Appendix A]{DicPil10a} are a complete orthonormal \emph{basis} for $L^2(\cube)$.  Thus, any function in $L^2$ may be written in series form as
\begin{equation} \label{Fourierdef}
f(\bsx) = \sum_{\bsk \in \N_0^d} \hf(\bsk) \E^{2 \pi \sqrt{-1} \ip{\bsk}{\bsx}/b}, \quad \text{where } \hf(\bsk) := \ip[2]{f}{\E^{2 \pi \sqrt{-1} \ip{\bsk}{\cdot}/b}},
\end{equation}
and the $L^2$ inner product of two functions  is the $\ell^2$ inner product of their Walsh series coefficients:
\[
\ip[2]{f}{g} = \sum_{\bsk \in \N_0^d} \hf(\bsk)\overline{\hg(\bsk)} =: \ip[2]{\bigl(\hf(\bsk)\bigr)_{\bsk \in \N_0^d}}{\bigl ( \hg(\bsk)\bigr )_{\bsk \in \N_0^d}}.
\]

Since the digital net $\cp_m$ is a group under $\oplus$, one may derive a useful formula for the average of a Walsh function sampled over a net. For all wavenumbers $\bsk \in \N_0^d$ and all $\bsx \in \cp_m$ one has
\begin{align*} 
\nonumber
0 & = \frac{1}{b^m} \sum_{i=0}^{b^m-1} [\E^{2 \pi \sqrt{-1} \ip{\bsk}{\bsz_i}/b} - \E^{2 \pi \sqrt{-1} \ip{\bsk}{\bsz_i \oplus \bsx}/b}]\\
\nonumber
& = \frac{1}{b^m} \sum_{i=0}^{b^m-1} [\E^{2 \pi \sqrt{-1} \ip{\bsk}{\bsz_i}/b} - \E^{2 \pi \sqrt{-1} \{\ip{\bsk}{\bsz_i}+\ip{\bsk}{\bsx}\}/b}] \quad \text{by } \eqref{bilinearlinxprop}\\
\label{sumeq}
& = [1 - \E^{2 \pi \sqrt{-1} \ip{\bsk}{\bsx}/b}] \frac{1}{b^m} \sum_{i=0}^{b^m-1}  \E^{2 \pi \sqrt{-1} \ip{\bsk}{\bsz_i}/b}.
\end{align*}
By this equality it follows that the average of the sampled Walsh function values is either one or zero, depending on whether the wavenumber is in the dual net or not:
\begin{equation}
\frac{1}{b^m} \sum_{i=0}^{b^m-1}  \E^{2 \pi \sqrt{-1} \ip{\bsk}{\bsz_i}/b} = \ind_{\cp_m^{\perp}}(\bsk) = \begin{cases} 1 , & \bsk \in \cp_m^{\perp}\\
 0,  & \bsk \in \N_0^d \setminus \cp_m^{\perp}.
 \end{cases} \label{Walsherr}
\end{equation}

Multivariate integrals may be approximated by the average  of the integrand sampled over a digitally shifted digital net, namely,
\begin{equation} \label{cubaturedef}
\hI_m(f) := \frac{1}{b^m} \sum_{i=0}^{b^m-1} f(\bsz_i \oplus\bsDelta).
\end{equation}
Under the assumption that $\ok(\bsz_i,\bsDelta) = \text{true}$ (see \eqref{okdef}) for all $i \in \N_0$, it follows that the error of this cubature rule is the sum of the Walsh coefficients of the integrand over those wavenumbers in the dual net:
\begin{align}
\nonumber
\biggabs{ \int_{\cube} f(\bsx) \, \D \bsx - \hI_m(f)} 
& = \Biggabs {\hf(\bszero) - \sum_{\bsk \in \N_0^d} \hf(\bsk) \hI_m\left(\E^{2 \pi \sqrt{-1} \ip{\bsk}{\cdot}/b}\right)} \\
\nonumber
& = \Biggabs {\hf(\bszero) - \sum_{\bsk \in \N_0^d} \hf(\bsk) \ind_{\cp_m^{\perp}}(\bsk) \E^{2 \pi \sqrt{-1} \ip{\bsk}{\bsDelta}/b}} \\ 
& = \Biggabs {\sum_{\bsk \in \cp_m^{\perp}\setminus \{\bszero\} } \hf(\bsk) \E^{2 \pi \sqrt{-1} \ip{\bsk}{\bsDelta}/b}}. \label{err1}
\end{align}
Adaptive Algorithm \ref{adapalgo} that we construct in Sect.\ \ref{ErrEstsec} works with this expression for the cubature error in terms of Walsh coefficients.  


Although the true Walsh series coefficients are generally not known, they can be estimated by the discrete Walsh transform, defined as follows:
\begin{align}
\nonumber
\tf_m(\bsk) 
&:= \hI_m\left(\E^{-2 \pi \sqrt{-1} \ip{\bsk}{\cdot}/b} f(\cdot) \right) = \frac{1}{b^m} \sum_{i=0}^{b^m-1} \E^{-2 \pi \sqrt{-1} \ip{\bsk}{\bsz_i\oplus \bsDelta}/b} f(\bsz_i\oplus \bsDelta) \\
\nonumber
&= \frac{1}{b^m}  \sum_{i=0}^{b^m-1} \left[\E^{-2 \pi \sqrt{-1} \ip{\bsk}{\bsz_i\oplus \bsDelta}/b}\sum_{\bsl \in \N_0^d} \hf(\bsl) \E^{2 \pi \sqrt{-1} \ip{\bsl}{\bsz_i\oplus \bsDelta}/b} \right] \\
\nonumber
& = \sum_{\bsl \in \N_0^d} \hf(\bsl)  \frac{1}{b^m}  \sum_{i=0}^{b^m-1}  \E^{2 \pi \sqrt{-1} \ip{\bsl \ominus \bsk}{\bsz_i\oplus \bsDelta}/b} \\
\nonumber 
& = \sum_{\bsl \in \N_0^d} \hf(\bsl) \E^{2 \pi \sqrt{-1} \ip{\bsl \ominus \bsk}{\bsDelta}/b}  \frac{1}{b^m}  \sum_{i=0}^{b^m-1}  \E^{2 \pi \sqrt{-1} \ip{\bsl \ominus \bsk}{\bsz_i}/b} \\
\nonumber
& = \sum_{\bsl \in \N_0^d} \hf(\bsl) \E^{2 \pi \sqrt{-1} \ip{\bsl \ominus \bsk}{\bsDelta}/b} \ind_{\cp_m^{\perp}}(\bsl \ominus \bsk) \\
\nonumber
& = \sum_{\bsl \in \cp^{\perp}_m} \hf(\bsk\oplus\bsl) \E^{2 \pi \sqrt{-1} \ip{\bsl}{\bsDelta}/b} \\
&= \hf(\bsk) + \sum_{\bsl \in \cp^{\perp}_m\setminus \{\bszero\}} \hf(\bsk\oplus\bsl) \E^{2 \pi \sqrt{-1} \ip{\bsl}{\bsDelta}/b}, \qquad \forall \bsk \in \N_0^d. \label{tfassum}
\end{align}
The discrete transform, $\tf_m(\bsk)$ is equal to the true Walsh transform, $\hf(\bsk)$, plus \emph{aliasing} terms proportional to $\hf(\bsk\oplus\bsl)$ where $\bsl$ is a nonzero wavenumber in the dual net.

\section{Error Estimation and an Adaptive Cubature Algorithm} \label{ErrEstsec}

\subsection{Wavenumber Map} \label{wavenummapsec}

Since the discrete Walsh transform has aliasing errors, some assumptions must be made about how quickly the true Walsh coefficients decay and which coefficients are more important.  This is done by way of a map of the non-negative integers \emph{onto} the space of all wavenumbers, $\tvk: \N_0 \to \N_0^d$, according to the following algorithm. 
\begin{algo} \label{wavenummapalgo} Given a digital sequence, $\cp_{\infty} = \{\bsz_i\}_{i=0}^{\infty}$ define $\tvk: \N_0 \to \N_0^d$ as follows:
\begin{description}
\item[\textbf{Step 1.}] Define $\tvk(0)=\bszero$.

\item[\textbf{Step 2.}] For $m=0, 1, \ldots$ \\
\hspace*{1.3cm} For $\kappa = 0, \ldots, b^{m} -1 $ \\
\hspace*{1.6cm} Choose the values of $\tvk(\kappa+b^{m}), \ldots, \tvk(\kappa+(b-1)b^{m})$ from the sets
\[
\left\{\bsk \in \N_0^d : \bsk \ominus \tvk(\kappa) \in \cp_{m}^{\perp}, \ip{\bsk \ominus \tvk(\kappa)}{\bsz_{b^m}}=a \right\}, \quad a=1, \ldots, b-1,
\]
\hspace*{1.6cm} but not necessarily in that order.
\end{description}
\end{algo}

There is some flexibility in the choice of this map.  One might choose $\tvk$ to map smaller values of $\kappa$ to smaller values of $\bsk$ based on some standard measure of size such as that given in \cite[(5.9)]{DicPil10a}. The motivation is that larger $\kappa$ should generally lead to smaller $\hf(\tvk(\kappa))$.  We use Algorithm \ref{pointeralgo} below to construct this map implicitly.

To illustrate the initial steps of Algorithm \ref{wavenummapalgo}, consider the Sobol' points in dimension 2. In this case, $\bsz_1=(1/2,1/2)$, $\bsz_2=(1/4,3/4)$ and $\bsz_4=(1/8,5/8)$. For $m=\kappa=0$, one needs 
\[
\tvk(1)\in\left\{\bsk \in \N_0^d : \bsk \ominus \tvk(0) \in \cp_{0}^{\perp}, \ip{\bsk \ominus \tvk(0)}{\bsz_{1}}=1 \right\}=\left\{\bsk \in \N_0^d : \ip{\bsk}{\bsz_{1}}=1 \right\}. 
\]
Thus, one may choose $\tvk(1)=(1,0)$. Next, $m=1$ and $\kappa=0$  leads to
\begin{multline*}
\tvk(2)\in\left\{\bsk \in \N_0^d : \bsk \ominus \tvk(0) \in \cp_{1}^{\perp}, \ip{\bsk \ominus \tvk(0)}{\bsz_{2}}=1 \right\} \\
=\left\{\bsk \in \N_0^d : \bsk \in \cp_{1}^{\perp}, \ip{\bsk}{\bsz_{2}}=1 \right\}. 
\end{multline*}
Hence, we can take $\tvk(2):=(1,1)$. Continuing with $m=\kappa=1$ requires \[
\tvk(3)\in\left\{\bsk \in \N_0^d : \bsk \ominus \tvk(1) \in \cp_{1}^{\perp}, \ip{\bsk \ominus \tvk(1)}{\bsz_{2}}=1 \right\},
\]
so the next choice can be $\tvk(3):=(0,1)$.

Introducing the shorthand notation $\hf_{\kappa} :=\hf(\tvk(\kappa))$  and $\tf_{m,\kappa} := \tf_m(\tvk(\kappa))$, the aliasing relation \eqref{tfassum} may be written as  \begin{equation}
\tf_{m,\kappa} = \hf_{\kappa} + \sum_{\lambda=1}^{\infty} \hf_{\kappa+\lambda b^{m}} \E^{2 \pi \sqrt{-1} \ip{\tvk(\kappa+\lambda b^{m}) \ominus \tvk(\kappa)}{\bsDelta}/b},
\label{tfassumc}
\end{equation}
and the cubature error in \eqref{err1} may be bounded as 
\begin{equation}
\biggabs{ \int_{\cube} f(\bsx) \, \D \bsx - \hI_m(f)} 
= \Biggabs {\sum_{\lambda=1}^{\infty} \hf_{\lambda b^m} \E^{2 \pi \sqrt{-1} \ip{\tvk(\lambda b^m)}{\bsDelta}/b}}
\le \sum_{\lambda=1}^{\infty} \left \lvert \hf_{\lambda b^m}\right \rvert. \label{err2}
\end{equation}
We will use the discrete transform, $\tf_{m,\kappa}$, to estimate true Walsh coefficients, $\hf_{\kappa}$, for $m$ significantly larger than $\lfloor \log_b(\kappa) \rfloor$.

\subsection{Sums of Walsh Series Coefficients and Cone Conditions}
Consider the following sums of the true and approximate Walsh series coefficients.  For $\ell,m \in \N_0$ and $\ell \le m$ let
\begin{gather*}
S_m(f) =  \sum_{\kappa=\left \lfloor b^{m-1} \right \rfloor}^{b^{m}-1} \bigabs{\hf_{\kappa}}, \qquad 
\hS_{\ell,m}(f)  = \sum_{\kappa=\left \lfloor b^{\ell-1} \right \rfloor}^{b^{\ell}-1} \sum_{\lambda=1}^{\infty} \bigabs{ \hf_{\kappa+\lambda b^{m}}}, \\
\wcS_m(f)=\hS_{0,m}(f) + \cdots + \hS_{m,m}(f)=
\sum_{\kappa=b^{m}}^{\infty} \bigabs{\hf_{\kappa}}, \qquad
\tS_{\ell,m}(f) = \sum_{\kappa=\left \lfloor b^{\ell-1}\right \rfloor}^{b^{\ell}-1} \bigabs{\tf_{m,\kappa}}.
\end{gather*}
The first three sums, $S_{m}(f)$, $\hS_{\ell,m}(f)$, and $\wcS_m(f)$, cannot be observed because they involve the true series coefficients. But, the last sum, $\tS_{\ell,m}(f)$, is defined in terms of the discrete Walsh transform and can easily be computed in terms of function values.  The details are described in the Appendix.

We now make critical assumptions about how certain sums provide upper bounds on others.  Let $\ell_* \in \N$ be some fixed integer and $\homega$ and $\wcomega$ be some non-negative valued functions with $\lim_{m \to \infty} \wcomega(m) = 0$ such that $\homega(r)\wcomega(r)<1$ for some $r\in\N$.  Define the cone of integrands
\begin{multline} \label{conecond}
\cc:=\{f \in L^2(\cube) : \hS_{\ell,m}(f) \le \homega(m-\ell) \wcS_m(f),\ \ \ell \le m, \\
\wcS_m(f) \le \wcomega(m-\ell) S_{\ell}(f),\ \  \ell_* \le \ell \le m\}.
\end{multline}
This is a cone because $f \in \cc \implies af \in \cc$ for all real $a$.

The first inequality asserts that the sum of the larger indexed Walsh coefficients bounds a partial sum of the same coefficients.  For example, this means that $\hS_{0,12}$, the sum of the values of the large black dots in Figure \ref{Walshcoeffig}, is no greater than some factor times $\wcS_{12}(f)$, the sum of the values of the gray $\boldsymbol{\times}$. Possible choices of $\homega$ are $\homega(m)=1$ or $\homega(m)=C b^{-\alpha m}$ for some $C>1$ and $0\le \alpha \le 1$. The second inequality asserts that the sum of the smaller indexed coefficients provides an upper bound on the sum of the larger indexed coefficients.  In other words, the fine scale components of the integrand are not unduly large compared to the gross scale components.  In Figure \ref{Walshcoeffig} this means that $\wcS_{12}(f)$ is no greater than some factor times $S_8(f)$, the sum of the values of the black squares.  This implies that $\bigabs{\hf_{\kappa}}$ does not dip down and then bounce back up too dramatically as $\kappa \to \infty$. The reason for enforcing the second inequality only for $\ell \ge \ell_*$ is that for small $\ell$, one might have a coincidentally small $S_\ell(f)$, while $\wcS_m(f)$ is large.

\begin{figure}
\centering
\includegraphics[width=9cm]{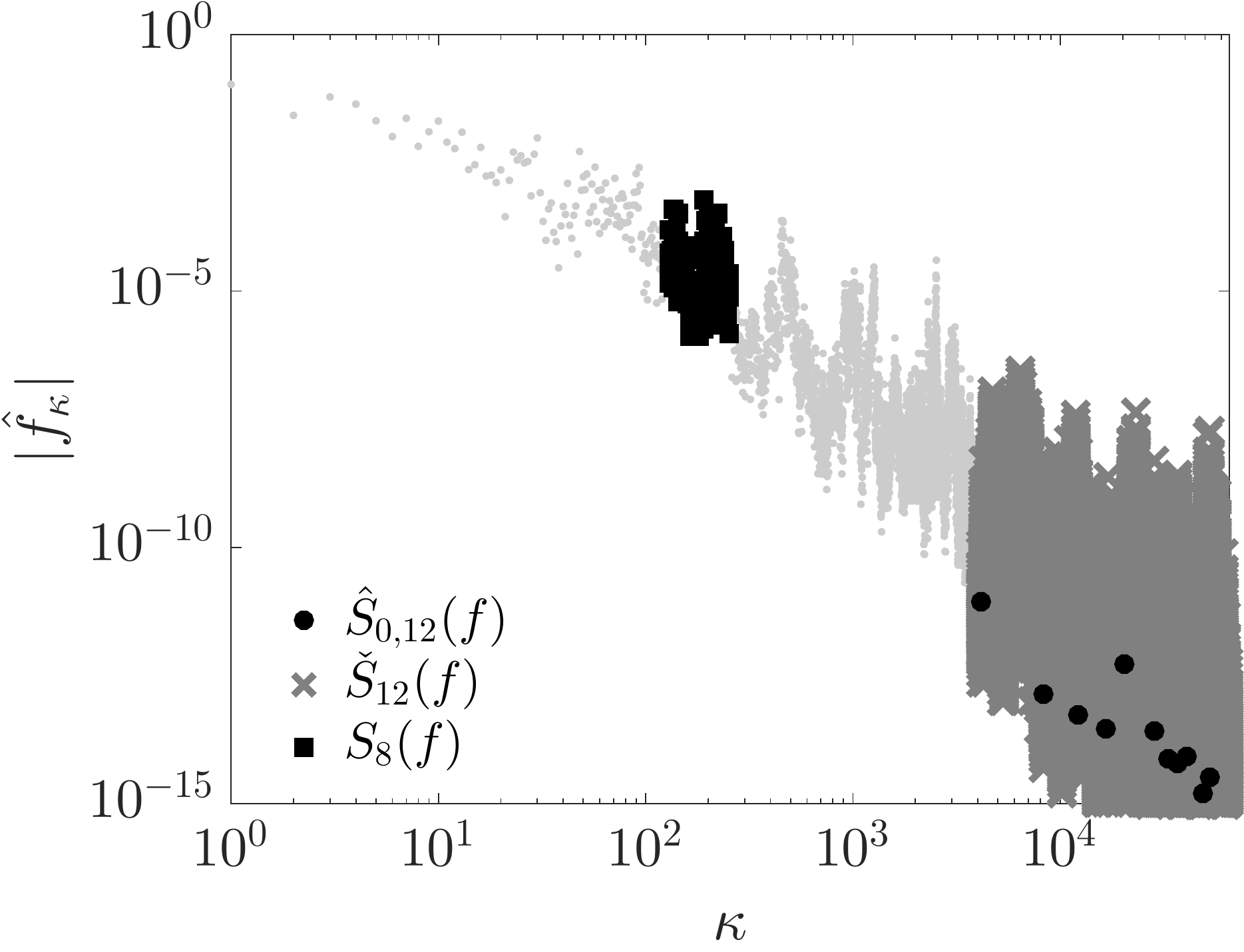}
\caption{The magnitudes of true Walsh coefficients for $f(x)=\mathrm{e}^{-3x}\sin\left(10{x^2}\right)$. \label{Walshcoeffig}}
\end{figure}

The cubature error bound in \eqref{err2} can be bounded in terms of $S_l(f)$, a certain finite sum of the Walsh coefficients for integrands $f$ in the cone $\cc$.  For  $\ell, m \in \N$, $\ell_* \le \ell \le m$, it follows that 
\begin{align}
\nonumber
\biggabs{\int_{\cube} f(\bsx) \, \D \bsx - \hI_m(f) }
&\le \sum_{\lambda=1}^{\infty} \bigabs{\hf_{\lambda b^{m}}} 
= \hS_{0,m}(f) \qquad \text{by \eqref{err2}} \\
& \le \homega(m) \wcS_m(f) \le \homega(m) \wcomega(m-\ell) S_\ell(f). \label{SSbd2}
\end{align}
Thus, the faster $S_\ell(f)$ decays as $\ell \to \infty$, the faster the cubature error must decay.

Unfortunately, the true Walsh coefficients are unknown.  Thus we must bound $S_\ell(f)$ in terms of the observable sum of the approximate coefficients, $\tS_{\ell,m}(f)$.  This is done as follows:
\begin{align}
\nonumber
S_\ell(f) &= \sum_{\kappa=\left \lfloor b^{\ell-1} \right \rfloor}^{b^{\ell}-1} \bigabs{ \hf_{\kappa}} \\
\nonumber
&= \sum_{\kappa=\left \lfloor b^{\ell-1} \right \rfloor}^{b^{\ell}-1} \biggabs{\tf_{m,\kappa} - \sum_{\lambda=1}^{\infty} \hf_{\kappa+\lambda b^{m}} \E^{2 \pi \sqrt{-1} \ip{\tvk(\kappa+\lambda b^{m}) \ominus \tvk(\kappa)}{\bsDelta}/b}} \qquad \text{by \eqref{tfassumc}}\\
\nonumber
&\le \sum_{\kappa=\left \lfloor b^{\ell-1} \right \rfloor}^{b^{\ell}-1} \bigabs{\tf_{m,\kappa}} + \sum_{\kappa=\left \lfloor b^{\ell-1} \right \rfloor}^{b^{\ell}-1} \sum_{\lambda=1}^{\infty} \bigabs{\hf_{\kappa+\lambda b^{m}}} 
= \tS_{\ell,m}(f) + \hS_{\ell,m}(f) \\
\nonumber
&\le \tS_{\ell,m}(f) + \homega(m-\ell) \wcomega(m-\ell) S_\ell(f) \qquad \text{by \eqref{conecond}}, \\
S_{\ell}(f) & \le \frac{\tS_{\ell,m}(f)}{1 - \homega(m-\ell) \wcomega(m-\ell)} \qquad \text{provided that } \homega(m-\ell) \wcomega(m-\ell) < 1. \label{SSbd}
\end{align}
Combining \eqref{SSbd2} with \eqref{SSbd} leads to the following conservative upper bound on the cubature error for $\ell, m \in \N$, $\ell_* \le \ell \le m$: 
\begin{equation}
\biggabs{\int_{\cube} f(\bsx) \, \D \bsx - \hI_m(f) }
\le \frac{\tS_{\ell,m}(f)\homega(m) \wcomega(m-\ell)}{1 - \homega(m-\ell) \wcomega(m-\ell)}. \label{errbd}
\end{equation}
This error bound suggests the following algorithm. 

\subsection{An Adaptive Cubature Algorithm and Its Cost}

\begin{algo}[Adaptive Digital Sequence Cubature, \texttt{cubSobol\_g}] \label{adapalgo} Given the parameter $\ell_* \in \N$ and the functions $\homega$ and  $\wcomega$ that define the cone $\cc$ in \eqref{conecond}, choose the parameter $r \in \N$ such that $\homega(r)\wcomega(r)<1$.  Let $\fC(m): = \homega(m)\wcomega(r)/[1 - \homega(r)\wcomega(r)]$ and $m=\ell_*+r$.
Given a tolerance, $\varepsilon$, and a routine that produces values of the integrand, $f$, do the following:

\begin{description}
\item[\textbf{Step 1.}] Compute the sum of the discrete Walsh coefficients, $\tS_{m-r,m}(f)$ according to Algorithm \ref{pointeralgo}.  

\item[\textbf{Step 2.}] Check whether the error tolerance is met, i.e., whether $\fC(m)  \tS_{m-r,m}(f) \le \varepsilon$. If so, then return the cubature $\hI_m(f)$ defined in \eqref{cubaturedef} as the answer.

\item[\textbf{Step 3.}] Otherwise, increment $m$ by one, and go to Step 1.
\end{description}

\end{algo}

There is a balance to be struck in the choice of $r$.  Choosing $r$ too large causes the error bound to depend on the Walsh coefficients with smaller indices, which may be large, even thought the Walsh coefficients determining the error are small.  Choosing $r$ too large makes $\homega(r)\wcomega(r)$ large, and thus the inflation factor, $\fC$, large to guard against aliasing.

\begin{theorem} \label{adapalgothm} If the integrand, $f$, lies in the cone, $\cc$, then the Algorithm \ref{adapalgo} is successful: 
\[
\biggabs{\int_{\cube} f(\bsx) \D \bsx - \hI_m(f)} \le \varepsilon.
\]
The number of integrand values required to obtain this answer is $b^m$, where the following upper bound on $m$ depends on the tolerance and unknown decay rate of the Walsh coefficients.
\[
m \le \min \{m' \ge \ell_*+r : \fC(m') [1+ \homega(r) \wcomega(r)] S_{m'-r}(f) \le \varepsilon \}
\]
The computational cost of this algorithm beyond that of obtaining the integrand values is $\Order(m b^m )$ to compute the discrete Walsh transform.
\end{theorem}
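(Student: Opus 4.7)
The plan is to verify the three claims in order: correctness upon termination, the upper bound on the terminating index $m$, and the computational cost.

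For correctness, note that the algorithm starts at $m=\ell_*+r$ and only increments $m$, so whenever termination occurs we have $m \ge \ell_* + r$, hence $\ell := m-r$ satisfies $\ell_* \le \ell \le m$. I would simply apply the a priori error bound \eqref{errbd} with this choice of $\ell$. Since $m-\ell = r$, the right-hand side of \eqref{errbd} reduces to $\tS_{m-r,m}(f)\,\homega(m)\wcomega(r)/[1-\homega(r)\wcomega(r)] = \fC(m)\tS_{m-r,m}(f)$, which is at most $\varepsilon$ precisely because the termination test in Step 2 passed. This gives $\bigabs{\int_{\cube} f(\bsx)\,\D\bsx - \hI_m(f)} \le \varepsilon$.

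For the upper bound on $m$, I need a converse-style inequality that controls the observable $\tS_{m-r,m}(f)$ in terms of the unobservable $S_{m-r}(f)$. Starting from the aliasing identity \eqref{tfassumc} and applying the triangle inequality coefficient by coefficient, summing over $\kappa$ in the dyadic block $\{\lfloor b^{\ell-1}\rfloor,\ldots, b^{\ell}-1\}$ yields $\tS_{\ell,m}(f) \le S_{\ell}(f) + \hS_{\ell,m}(f)$. Chaining the two cone conditions in \eqref{conecond} gives $\hS_{\ell,m}(f) \le \homega(m-\ell)\wcomega(m-\ell)\,S_\ell(f)$ for $\ell_* \le \ell \le m$, hence with $\ell = m-r$
\[
\tS_{m-r,m}(f) \le \bigl[1 + \homega(r)\wcomega(r)\bigr]\,S_{m-r}(f).
\]
Thus any $m'\ge \ell_*+r$ satisfying $\fC(m')[1+\homega(r)\wcomega(r)]S_{m'-r}(f)\le \varepsilon$ also satisfies $\fC(m')\tS_{m'-r,m'}(f) \le \varepsilon$, so Step 2 succeeds at or before level $m'$. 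The returned $m$ is therefore at most the minimum of such $m'$.

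For the computational cost, the per-iteration work is dominated by updating the discrete Walsh transform on $b^m$ samples, which by the fast Walsh transform costs $\Order(m b^m)$ (Algorithm \ref{pointeralgo} describes the in-place update). Because the costs across iterations grow geometrically in $b^m$, the final iteration dominates and the total cost outside function evaluation is $\Order(m b^m)$; the total sample count is $b^m$ by construction of $\hI_m$.

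The main obstacle is the second part: the cone inequalities in \eqref{conecond} are used in the paper to bound $S_\ell(f)$ \emph{from above} by $\tS_{\ell,m}(f)$, whereas here the same inequalities must be redeployed in the \emph{reverse} direction to bound $\tS_{\ell,m}(f)$ from above by $S_\ell(f)$. Getting the direction of aliasing right and verifying that the restriction $\ell_* \le \ell \le m$ is respected throughout (which forces the algorithm to start at $m = \ell_* + r$) is where care is needed; the remaining steps are direct consequences of \eqref{errbd}, \eqref{tfassumc}, and the fast Walsh transform complexity.
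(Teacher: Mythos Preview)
Your proposal is correct and follows essentially the same approach as the paper: correctness via \eqref{errbd} with $\ell=m-r$, the cost bound via the reverse triangle-inequality estimate $\tS_{\ell,m}(f)\le S_\ell(f)+\hS_{\ell,m}(f)\le[1+\homega(m-\ell)\wcomega(m-\ell)]S_\ell(f)$ from \eqref{tfassumc} and \eqref{conecond}, and the $\Order(mb^m)$ transform cost from the fast Walsh algorithm. The paper's proof is terser but the logical structure is identical.
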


\begin{proof}
The success of this algorithm comes from applying \eqref{errbd}.  To bound the number of integrand values required note that argument leading to \eqref{SSbd} can be modified to provide an upper bound on $\tS_{\ell,m}(f)$ in terms of $S_{\ell}(f)$:
\begin{align}
\nonumber
\tS_{\ell,m}(f) &= \sum_{\kappa=\left \lfloor b^{\ell-1} \right \rfloor}^{b^{\ell}-1} \bigabs{ \tf_{m,\kappa}}\\
\nonumber
& = \sum_{\kappa=\left \lfloor b^{\ell-1} \right \rfloor}^{b^{\ell}-1} \biggabs{\hf_{\kappa} + \sum_{\lambda=1}^{\infty} \hf_{\kappa+\lambda b^{m}} \E^{2 \pi \sqrt{-1} \ip{\tvk(\kappa+\lambda b^{m}) \ominus \tvk(\kappa)}{\bsDelta}/b}} \qquad \text{by \eqref{tfassumc}}\\
\nonumber
&\le\sum_{\kappa=\left \lfloor b^{\ell-1} \right \rfloor}^{b^{\ell}-1} \bigabs{\hf_{\kappa}} + \sum_{\kappa=\left \lfloor b^{\ell-1} \right \rfloor}^{b^{\ell}-1} \sum_{\lambda=1}^{\infty} \bigabs{\hf_{\kappa+\lambda b^{m}}} 
= S_{\ell}(f) + \hS_{\ell,m}(f) \\
\nonumber
&\le [1  + \homega(m-\ell) \wcomega(m-\ell)] S_\ell(f) \qquad \text{by \eqref{conecond}}. \label{SSbd3}
\end{align}
Thus, the upper bound on the error in Step 2 of Algorithm \ref{adapalgo}, is itself bounded above by $\fC(m) [1  + \homega(r) \wcomega(r)] S_{m-r}(f)$.  Therefore, the stopping criterion in Step 2 must be satisfied no later than when this quantity falls below $\varepsilon$. 

The computation of the discrete Walsh transform and $\tS_{m-r,m}(f)$ is described in Algorithm \ref{pointeralgo} in the Appendix.  The cost of this algorithm is $\Order(mb^m)$ operations. \hfill \qed
\end{proof}

\section{Numerical Experiments} \label{numexpsec}

Algorithm \ref{adapalgo} has been implemented in MATLAB code as the function \texttt{cubSobol\_g}. It is included in our Guaranteed Automatic Integration Library (GAIL) \cite{ChoEtal15a}.  Our \texttt{cubSobol\_g} utilizes MATLAB's built-in Sobol' sequences, so $b=2$.  The default algorithm parameters are 
\[
\ell_*=6, \qquad r=4, \qquad \fC(m)=5 \times 2^{-m},
\]
and mapping $\tvk$ is fixed heuristically according to Algorithm \ref{pointeralgo}. Fixing $\fC$ partially determines $\homega$ and $\wcomega$ since $\homega(m) = \fC(m)/\homega(r)$ and $\homega(r)\wcomega(r) = \fC(r)/[1+\fC(r)]$.

We have tried \texttt{cubSobol\_g} on an example from \cite{Kei96}:
\begin{equation} \label{KeisterEx}
I=\int_{\R^d} \E^{-\norm{\bst}^2} \cos(\norm{\bst}) \, \D \bst 
= \pi^{d/2}\int_{\cube} \cos\left(\sqrt{\frac 12 \sum_{j=1}^d [\Phi^{-1}(x_j)]^2} \right) \, \D \bsx,
\end{equation}
where $\Phi$ is the standard Gaussian distribution function.  We generated $1000$ IID random values of the dimension $d=\lfloor \E^D \rfloor$ with $D$ being uniformly distributed between $0$ and $\log(20)$.  Each time \texttt{cubSobol\_g} was run, a different scrambled and shifted Sobol' sequence was used.  The tolerance was met about $97\%$ of the time and failures were more likely among the higher dimensions. For those cases where the tolerance was not met, mostly the larger dimensions, the integrand lay outside the cone $\cc$.  Our choice of $\tilde{\boldsymbol{k}}$ via Algorithm \ref{pointeralgo} depends somewhat on the particular scrambling and digital shift, so the definition of $\cc$ also depends mildly on these.

\begin{figure}
\centering
\includegraphics[width=7cm]{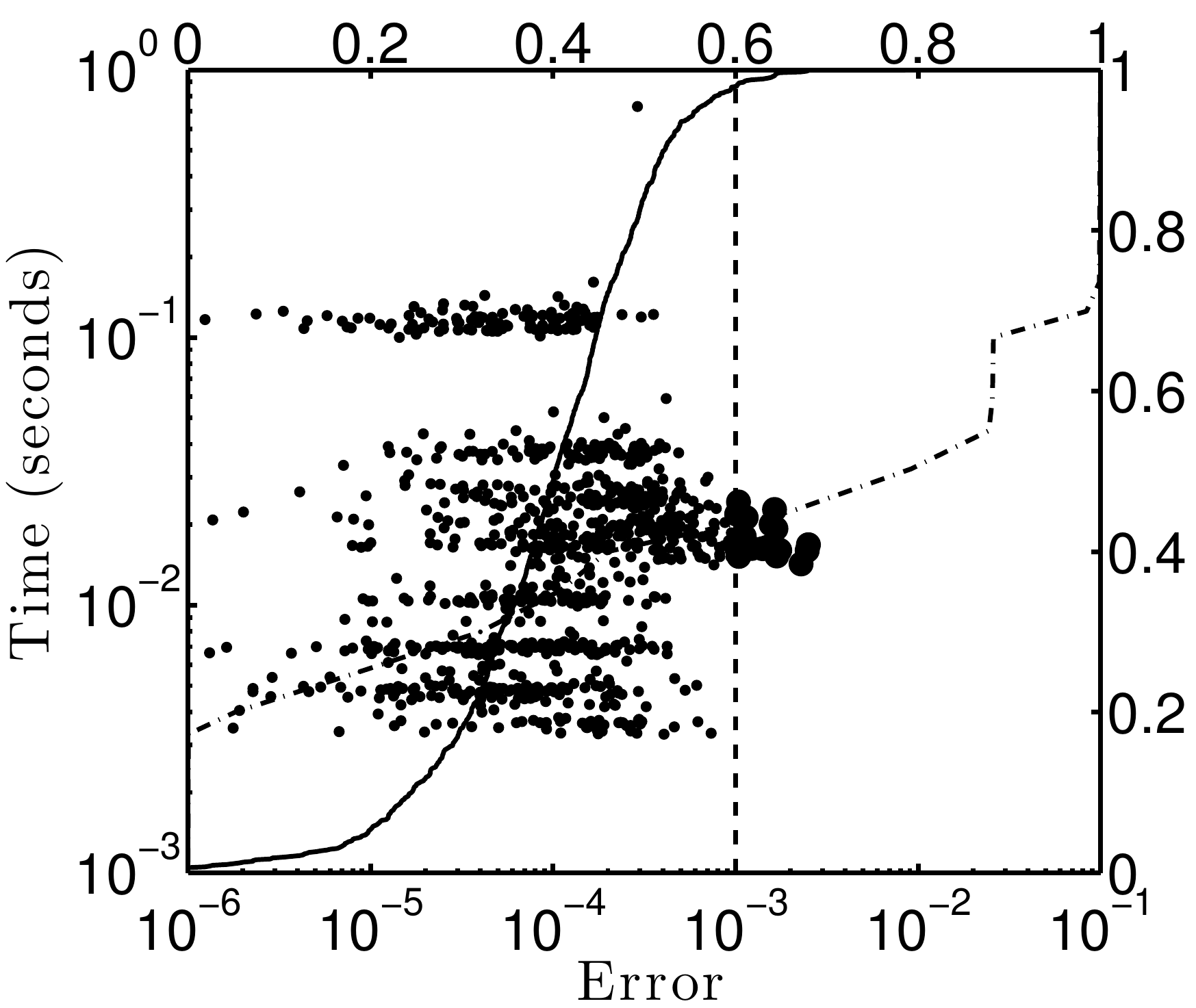}
\caption{Time required and error observed for \texttt{cubSobol\_g} (Algorithm \ref{adapalgo}) for the Keister example,  \eqref{KeisterEx}. Small dots denote the time and error when the tolerance of $\varepsilon=0.001$ was met.  Large dots denote the time and error when the tolerance was not met.  The solid line denotes the empirical distribution function of the error, and the dot-dashed line denotes the empirical distribution function of the time. \label{numexamplesfig}} 
\end{figure}

\section{Discussion}

There are few quasi-Monte Carlo cubature algorithms available that adaptively determine the sample size needed based on integrand values.  The chief reason is that reliable error estimation for quasi-Monte Carlo is difficult.  Quasi-standard error has serious drawbacks, as explained in \cite{Owe06a}.  Internal replications have no explicit theory.  IID replications of randomized quasi-Monte Carlo rules are sometimes used, but one does not know how many replications are needed.

The proposed error bound and adaptive algorithm here are practical and have theoretical justification. The conditions imposed on the sums of the (true) Fourier-Walsh coefficients make it possible to bound the cubature error in terms of discrete Fourier-Walsh coefficients.  The set of integrands satisfying these conditions is a non-convex cone \eqref{conecond}, thereby placing us in a setting where adaption has the opportunity to be beneficial.

Problems requiring further consideration include how to choose the default parameters for Algorithm \ref{adapalgo}.  We would also like to extend our algorithm and theory to the case of relative error.

\begin{acknowledgement}
This work was partially supported by US National Science Foundation grants DMS-1115392, DMS-1357690, and DMS-1522687.  The authors thank Ronald Cools and Dirk Nuyens for organizing MCQMC 2014.  We thank Sergei Kucherenko and Art Owen for organizing the special session in honor of Ilya M. Sobol'.  We are grateful for Professor Sobol's many contributions to MCQMC and related fields.   The suggestions made by Sou-Cheng Choi, Yuhan Ding, Lan Jiang, and the anonymous referees to improve this manuscript are greatly appreciated.  
\end{acknowledgement}


\section*{Appendix:  Fast Computation of the Discrete Walsh Transform}

Let $y_0, y_1, \ldots$ be some data.  Define $Y_\nu^{(m)}$ for $\nu=0, \ldots, b^{m}-1$ as follows:
\[
Y^{(m)}_{\nu}  := \frac{1}{b^m} \sum_{i=0}^{b^m-1} \E^{-2 \pi \sqrt{-1} \sum_{\ell=0}^{m-1} \nu_\ell i_\ell/b} y_i
= \frac{1}{b^m} \sum_{i_{m-1}=0}^{b-1} \cdots \sum_{i_0=0}^{b-1} \E^{-2 \pi \sqrt{-1} \sum_{\ell=0}^{m-1} \nu_\ell i_\ell/b} y_i,
\]
where $i=i_0 + i_1 b + \cdots i_{m-1} b^{m-1}$ and $\nu=\nu_0 + \nu_1 b + \cdots \nu_{m-1} b^{m-1}$.  For all $i_j, \nu_j \in \F_b$,  $j,\ell=0, \ldots, m-1$,  recursively define
\[
Y_{m,0}(i_0, \ldots, i_{m-1}):= y_i,
\]
\begin{multline*}
Y_{m,\ell+1}(\nu_0, \ldots, \nu_{\ell},i_{\ell+1}, \ldots, i_{m-1}) \\
:= \frac{1}{b} \sum_{i_\ell=0}^{b-1} \E^{-2 \pi \sqrt{-1} \nu_\ell i_\ell/b } Y^{(m)}_{m,\ell}(\nu_1, \ldots, \nu_{\ell-1},i_{\ell}, \ldots, i_{m-1}).
\end{multline*}
This allows us to identify $Y^{(m)}_{\nu}=Y_{m,m}(\nu_0, \ldots, \nu_{m-1})$.  By this iterative process one can compute $Y^{(m)}_0, \ldots, Y^{(m)}_{b^m-1}$ in only $\Order(m b^m)$ operations.

Note also, that $Y_{m+1,m}(\nu_0, \ldots, \nu_{m-1},0)=Y_{m,m}(\nu_0, \ldots, \nu_{m-1})=Y^{(m)}_{\nu}$.  This means that the work done to compute $Y^{(m)}_{\nu}$ can be used to compute $Y^{(m+1)}_{\nu}$.

Next, we relate the $Y_{\nu}$ to the discrete Walsh transform of the integrand $f$. For every $\bsk \in \N_0^d$ and every digital sequence $\cp_{\infty} = \{\bsz_i\}_{i=0}^{\infty}$, let 
\begin{equation} \label{numapdefeq}
\tnu_0(\bsk) := 0, \qquad \tnu_m(\bsk) := \sum_{\ell=0}^{m-1} \ip{\bsk}{\bsz_{b^\ell}} b^{\ell} \in \natm, \quad m \in \N.
\end{equation}
If we set $y_i=f(\bsz_i+\bsDelta)$, and if $\tnu_m(\bsk)=\nu$, then 
\begin{align}
\nonumber
\tf_m(\bsk) 
&= \frac{1}{b^m} \sum_{i=0}^{b^m-1} \E^{-2 \pi \sqrt{-1} \ip{\bsk}{\bsz_i\oplus \bsDelta}/b} y_i \\
\nonumber
&= \frac{\E^{-2 \pi \sqrt{-1} \ip{\bsk}{\bsDelta}/b}}{b^m} \sum_{i=0}^{b^m-1} \E^{-2 \pi \sqrt{-1} \ip{\bsk}{\bsz_i}/b} y_i \qquad \text{by \eqref{bilinearlinxprop}} \\
\nonumber
&= \frac{\E^{-2 \pi \sqrt{-1} \ip{\bsk}{\bsDelta}/b}}{b^m} \sum_{i=0}^{b^m-1} \E^{-2 \pi \sqrt{-1} \ip{\bsk}{\sum_{j=0}^{m-1} i_j\bsz_{b^j}}/b} y_i \qquad \text{by \eqref{cpinfvector}}  \\ \nonumber
&= \frac{\E^{-2 \pi \sqrt{-1} \ip{\bsk}{\bsDelta}/b}}{b^m} \sum_{i=0}^{b^m-1} \E^{-2 \pi \sqrt{-1} \sum_{j=0}^{m-1} i_j \ip{\bsk}{\bsz_{b^j}}/b} y_i \qquad \text{by \eqref{bilinearlinxprop}}  \\ 
\nonumber 
& = \frac{\E^{-2 \pi \sqrt{-1} \ip{\bsk}{\bsDelta}/b}}{b^m} \sum_{i=0}^{b^m-1} \E^{-2 \pi \sqrt{-1} \sum_{\ell=0}^{m-1} \nu_\ell i_\ell/b} y_i \qquad \text{by \eqref{numapdefeq}}\\
& = \E^{-2 \pi \sqrt{-1} \ip{\bsk}{\bsDelta}/b} Y^{(m)}_{\nu}.
\end{align}

Using the notation in Sect.\ \ref{ErrEstsec}, for all $m \in \N_0$ define a pointer $\rnu_m:\natm \to \natm$ as $\rnu_m(\kappa) := \tnu_m(\tvk(\kappa))$. It follows that 
\begin{gather}
\nonumber
\tf_{m,\kappa} = \tf_m(\tvk(\kappa)) = \E^{-2 \pi \sqrt{-1} \ip{\bsk}{\bsDelta}/b} Y^{(m)}_{\rnu_m(\kappa)}, \\
\label{tSrnu}
\tS_{\ell,m}(f) = \sum_{\kappa=b^{\ell-1}}^{b^{\ell}-1} \bigabs{ \tf_{m,\kappa}} = \sum_{\kappa=b^{\ell-1}}^{b^{\ell}-1} \Bigabs{ Y^{(m)}_{\rnu_m(\kappa)}}.
\end{gather}
The quantity $\tS_{m-r,m}(f)$ is the key to the stopping criterion in Algorithm \ref{adapalgo}.

If the map $\tvk:\N_0 \to \N_0^d$ defined in Algorithm \ref{wavenummapalgo} is known explicitly, then specifying $\rnu_m$ is straightforward.  However, in practice the bookkeeping involved in constructing $\tvk$ might be tedious, so we take a data-dependent approach to constructing the pointer $\rnu_m(\kappa)$ for $\kappa \in \natm$ directly, which then defines $\tvk$ implicitly. 

\begin{algo} \label{pointeralgo} Let $r \in \N$ be fixed.  Given the input $m \in \N_0$, the discrete Walsh coefficients $Y^{(m)}_\nu$ for $\nu \in \natm$, and also the pointer $\rnu_{m-1}(\kappa)$ defined for $\kappa \in \N_{0,m-1}$, provided $m>0$, 
\begin{description}  
\item[\textbf{Step 1.}]  If $m=0$, then define $\rnu(0)=0$ and go to Step 4.  
\item[\textbf{Step 2.}]  Otherwise, if $m \ge 1$, then initialize $\rnu_m(\kappa)=\rnu_{m-1}(\kappa)$ for $\kappa \in \N_{0,m-1}$ and $\rnu_m(\kappa)=\kappa$ for $\kappa =b^{m-1}, \ldots, b^{m}-1$.

\item[\textbf{Step 3.}]  For $\ell =m-1, m-2, \ldots, \max(1,m-r)$,\\
\hspace*{1.5cm} for $\kappa = 1, \ldots, b^{\ell} -1 $ \\
\hspace*{2cm} Find $a^*$ such that $\Bigabs{Y^{(m)}_{\rnu_m(\kappa+a^* b^{\ell})}} \ge \Bigabs{Y^{(m)}_{\rnu_m(\kappa+a b^{\ell})}} $ for all $a \in \F_b$.\\
\hspace*{2cm} Swap the values of $\rnu_m(\kappa)$ and $\rnu_m(\kappa+a^* b^{\ell})$.

\item[\textbf{Step 4.}]  Return $\rnu_m(\kappa)$ for $\kappa \in \natm$.  If $m \ge r$, then compute $\tS_{m-r,r}(f)$ according to \eqref{tSrnu}, and return this value as well.

\end{description}
\end{algo}

\begin{lemma}  Let $\cp_{m,\kappa}^{\perp}:=\{\bsk \in \N_0^d : \tnu_m(\bsk)=\rnu_m(\kappa)\}$ for $\kappa \in \natm$, $m \in \N_0$, where $\rnu_m$ is given by  Algorithm \ref{pointeralgo}.  Then we implicitly have defined the map $\tvk$ in the sense that any map $\tvk: \N_{0,m} \to \N_0^d$ that chooses $\tvk(0) = \bszero \in \cp_{m,0}^{\perp}$, and  $\tvk(\kappa) \in \cp_{m,\kappa}$ for all $\kappa=1, \ldots, b^{m}-1$ gives the same value of $S_{m-r,r}(f)$.  It is also consistent with Algorithm \ref{wavenummapalgo} for $\kappa \in \N_{0,m-r}$.
\end{lemma}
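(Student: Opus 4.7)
The plan is to establish the three assertions of the lemma in turn: the existence of admissible maps $\tvk$, the invariance of $\tS_{m-r,m}(f)$ (which I take to be the quantity intended by the slightly non-standard notation ``$S_{m-r,r}(f)$'') under the choice of $\tvk$, and the consistency of this implicit definition with Algorithm \ref{wavenummapalgo} for $\kappa\in\N_{0,m-r}$.

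First I would identify $\cp_{m,\kappa}^{\perp}$ as a coset of the dual net $\cp_m^{\perp}$. Because each coordinate $\ip{\bsk}{\bsz_{b^\ell}}$ is $\F_b$-linear in $\bsk$ by \eqref{bilinearlinkprop}, the map $\tnu_m:\N_0^d\to\natm$ from \eqref{numapdefeq} is a group homomorphism onto $\natm$ (equipped with digitwise addition modulo $b$) with kernel exactly $\cp_m^{\perp}$; surjectivity follows from \eqref{netpropc} together with the linear independence of $\{\bsz_1,\bsz_b,\ldots,\bsz_{b^{m-1}}\}$ guaranteed by \eqref{cpinfvector}. Hence every $\cp_{m,\kappa}^{\perp}=\tnu_m^{-1}(\rnu_m(\kappa))$ is a nonempty coset of $\cp_m^{\perp}$, and one may always pick $\tvk(\kappa)$ from it.

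Next I would invoke the Appendix identity $\tf_m(\bsk)=\E^{-2\pi\sqrt{-1}\ip{\bsk}{\bsDelta}/b}\,Y^{(m)}_{\tnu_m(\bsk)}$, which yields $|\tf_m(\bsk)|=|Y^{(m)}_{\tnu_m(\bsk)}|$ depending on $\bsk$ only through $\tnu_m(\bsk)$. For any admissible $\tvk$ we then have $|\tf_{m,\kappa}|=|Y^{(m)}_{\rnu_m(\kappa)}|$, so
\[
\tS_{m-r,m}(f)=\sum_{\kappa=\lfloor b^{m-r-1}\rfloor}^{b^{m-r}-1}\bigabs{Y^{(m)}_{\rnu_m(\kappa)}}
\]
is determined entirely by $\rnu_m$, independently of the representatives chosen. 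This settles the invariance claim.

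Finally, I would induct on $m$ to establish consistency with Algorithm \ref{wavenummapalgo} on $\N_{0,m-r}$, exhibiting at each stage a set of non-deterministic choices in that algorithm matching the $\rnu_m$ produced by Algorithm \ref{pointeralgo}. The base case $\rnu_0(0)=0$ corresponds to $\tvk(0)=\bszero$. For the induction step, the initialization $\rnu_m(\kappa)=\rnu_{m-1}(\kappa)$ on $\kappa<b^{m-1}$ is compatible because $\cp_m^{\perp}\subset\cp_{m-1}^{\perp}$ gives $\cp_{m,\kappa}^{\perp}\subset\cp_{m-1,\kappa}^{\perp}$, so the prior $\tvk(\kappa)$ still lies in the refined coset; the fresh values $\rnu_m(\kappa)=\kappa$ for $\kappa\in[b^{m-1},b^m)$ correspond to choosing $\tvk(\kappa)$ in the coset demanded at level $m-1$ of Algorithm \ref{wavenummapalgo}, namely $\ip{\tvk(\kappa)\ominus\tvk(\kappa-ab^{m-1})}{\bsz_{b^{m-1}}}=a$. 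Each swap in Step 3 of Algorithm \ref{pointeralgo} merely reorders the representatives within one $b$-tuple $\{\rnu_m(\kappa+ab^\ell):a\in\F_b\}$, which is precisely the freedom captured by the phrase ``not necessarily in that order'' in Algorithm \ref{wavenummapalgo}; restricting the claim to $\kappa\in\N_{0,m-r}$ ensures no deeper swap overrides the level of interest. The main obstacle is verifying this coset-preservation invariant rigorously through the nested chain $\cp_0^{\perp}\supset\cp_1^{\perp}\supset\cdots\supset\cp_m^{\perp}$ during the recursion and swap loop; once that bookkeeping is in place, parts (i) and (ii) reduce to direct applications of the linearity of $\tnu_m$ and the Walsh-transform identity from the Appendix.
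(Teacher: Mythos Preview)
Your proposal is essentially correct and follows the same inductive skeleton as the paper's proof: both arguments establish (i) the invariance of $\tS_{m-r,m}(f)$ from the identity $\lvert\tf_m(\bsk)\rvert=\lvert Y^{(m)}_{\tnu_m(\bsk)}\rvert$, and (ii) consistency with Algorithm~\ref{wavenummapalgo} by checking that initialization and the swap loop in Algorithm~\ref{pointeralgo} preserve the appropriate coset structure, with the restriction $\ell\ge\max(1,m-r)$ protecting the indices $\kappa\in\N_{0,m-r}$. Your explicit identification of $\tnu_m$ as an $\F_b$-linear surjection with kernel $\cp_m^{\perp}$, making each $\cp_{m,\kappa}^{\perp}$ a coset, is a helpful clarification that the paper leaves implicit; the paper instead records the two invariants \eqref{pointpfconda} and \eqref{pointpfcondb} directly and verifies them through the steps of Algorithm~\ref{pointeralgo}.

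One sentence to tighten: you write that ``the prior $\tvk(\kappa)$ still lies in the refined coset'' because $\cp_{m,\kappa}^{\perp}\subset\cp_{m-1,\kappa}^{\perp}$. That inclusion runs the other way for the claim as phrased---a point in the larger set $\cp_{m-1,\kappa}^{\perp}$ need not land in the smaller $\cp_{m,\kappa}^{\perp}$. What the nesting actually gives (and what the paper uses via \eqref{pointpfcondb}) is that any representative chosen at level $m$ remains admissible at all earlier levels, so one can read the induction backwards: pick $\tvk(\kappa)\in\cp_{m,\kappa}^{\perp}$ and observe it could have been a legal choice in Algorithm~\ref{wavenummapalgo} at every stage up to $m$. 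With that reversal the bookkeeping you flag as the ``main obstacle'' goes through exactly as in the paper.
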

\begin{proof}  The constraint that $\tvk(\kappa) \in \cp_{m,\kappa}$ implies that $S_{m-r,r}(f)$ is invariant under all $\tvk$ chosen according to the assumption that $\tvk(\kappa) \in \cp_{m,\kappa}$. By definition $\bszero \in \cp_{m,0}^{\perp}$ remains true for all $m$ for Algorithm \ref{pointeralgo}.

The remainder of the proof is to show that choosing $\tvk(\kappa)$ by the hypothesis of this lemma is consistent with Algorithm \ref{wavenummapalgo}. To do this we show that for $m \in N_0$
\begin{equation}
\bsk \in \cp_{m,\kappa}^\perp,\ \bsl \in \cp_{m,\kappa + a b^{\ell}}^\perp \implies \bsk \ominus \bsl \in \cp_{\ell}^{\perp} \qquad \text{for all } \kappa=1, \ldots, b^{\ell}, \ell < m,
\label{pointpfconda}
\end{equation}
and that 
\begin{equation}
\cp_{m,\kappa}^\perp \supset \cp_{m+1,\kappa}^\perp \supset \cdots \qquad \text{for } \kappa \in \N_{0,m-r} \text{ provided }m \ge r. \label{pointpfcondb}
\end{equation}
The proof proceeds by induction.  Since $\cp_{0,0}^{\perp}=\N_0^d$, the above two conditions are satisfied automatically.

If they are satisfied for $m-1$ (instead of $m$), then the initialization stage in Step 2 of Algorithm \ref{pointeralgo} preserves \eqref{pointpfconda} for $m$.  The swapping of $\rnu_m(\kappa)$ and $\rnu_m(\kappa+a^* b^{\ell})$ values in Step 3 also preserves \eqref{pointpfconda}.  Step 3 may cause $\cp_{m-1,\kappa}^\perp \cap \cp_{m,\kappa}^\perp = \emptyset$ for some larger values of $\kappa$, but the constraint on the values of $\ell$ in Step 3 mean that \eqref{pointpfcondb} is preserved.
\qed
\end{proof}

\end{document}